\newcommand{\Z}{{\mathbb Z}}
\newcommand{\D}{{\mathbb D}}
\newcommand{\T}{{\mathbb T}}
\newcommand{\pD}{{\partial\mathbb{D}}}
\newcommand{\CC}{{\mathcal C}}
\newcommand{\CE}{{\mathcal E}}
\newcommand{\CI}{{\mathcal I}}
\newcommand{\CZ}{{\mathcal Z}}
\DeclareMathOperator{\dist}{{dist}}
\newtheorem{theorem}{Theorem}[section]
\theoremstyle{definition}
\newtheorem*{remark}{Remark}
\theoremstyle{definition}
\newtheorem{defi}[theorem]{Definition}
\newtheorem{Theorem}{Theorem}
\newtheorem{prop}[theorem]{Proposition}
\newtheorem{lemma}[theorem]{Lemma}
\numberwithin{equation}{section}
\begin{document}

%%% Fengpeng
\author[F.\ Wang]{Fengpeng Wang}
\address{Ocean University of China, Qingdao 266100, Shandong, China and Rice University, Houston, TX~77005, USA}
\email{wfpouc@gmail.com}
\thanks{F.W.\ was supported by CSC (No.201606330003) and NSFC (No.11571327).}

\title{A formula related to CMV matrices and Szeg\H o cocycles}

\begin{abstract}
For Schr\"odinger operators, there is a well known and widely used formula connecting the transfer matrices and Dirichlet determinants. No analog of this formula was previously known for CMV matrices. In this paper we fill this gap and provide the CMV analog of this formula.
\end{abstract}

\maketitle

\section{Introduction}

In recent years, orthogonal polynomials on the unit circle (OPUC) have been extensively studied, see \cite{onefoot} for a expository note and books \cite{OPUC1, OPUC2} for details. In the study of orthogonal polynomials on the real line (OPRL), Jacobi matrix representation is one of the key tools, hence people want to get the matrix realization of OPUC. While for OPUC case, different orthonomal bases corresponds to different matrix representation. In 2003, M. Cantero, L. Moral, L. Vel\'azquez \cite{cmv05} gave the "right" basis and the corresponding matrix representation which is named after them, i.e. CMV matrix. Naturally, it is viewed as the unitary analog of Jacobi matrices. 

Since Jacobi matrices have been studied for more than one hundred years, there are fruitful results in this area and their relation with OPRL is clear. As CMV matrices are the unitary analog of Jacobi matrices, people expect the results for Jacobi matrices also hold for CMV matrices. Indeed, many of them have been carried out in Barry Simon's monographs \cite{OPUC1, OPUC2}, while the formula in our paper is one of the exceptions.

For a special class of Jacobi operators, one dimensional Schr\"odinger operators, they can be viewed as tridiagonal matrices and people related its Dirichlet determinants to the associated transfer matrices with an eqaution, which is playing an important role in the study of Schr\"odinger operators, for example it can be used to prove Anderson localization. The formula in our paper is the CMV analog of this relation, which means it might be useful to get Anderson localization of CMV matrices and indeed it is.

The importance of Anderson localization is due to the seminal work by the physicist P. W. Anderson \cite{anderson}, which is named after him and helped him get the 1977 physics Nobel prize. In physics, Anderson localization refers to the phenomena that disorder in the media will cause suppression of electron transport; while in mathematics, it means the corresponding operator has only pure point spectrum with exponentially decaying eigenfunctions.

Mathematically rigorous studies of the Anderson Model and other models started in the 1970s and several powerful methods have been found to prove Anderson localization, such as {\it multiscale analysis} (MSA) introduced by J. Fr\"ohlich and T. Spencer \cite{FS83} , {\it fractional moments method} (FMM) developed by M. Aizenman and S. Molchanov \cite{AM93}, etc. Recently, the method developed by J. Bourgain, M. Goldstein and W. Schlag \cite{BG00, BS00} for one-dimensional Schr\"odinger operators has been applied widely to other one-dimensional models \cite{7authors17, DW17, K05, K14, K17}, which motivated us to apply this method to CMV matrices.

\section{Preliminaries}
In this section, we recall the Schr\"odinger version of this formula and give some preparations for CMV matrix and Szeg\H o cocycle map. 
\subsection{Schr\"odinger case}
Consider the lattice Schr\"odinger operator $H_{v}$ acting on $\ell^{2}(\mathbb{Z})$
\begin{equation*}
[H_{v}u](n)=u(n+1)+u(n-1)+v_{n}u(n).
\end{equation*}
Then the solutions to $u(n+1)+u(n-1)+v_{n}u(n)=Eu_{n}$ must satisfy
\begin{equation*}
\begin{pmatrix} u(n+1) \\ n(n)  \end{pmatrix} = M_{v_{n}}^{E} \begin{pmatrix} n(n) \\ u(n-1) \end{pmatrix}
\end{equation*}
where 
\begin{equation*}
M_{v_{n}}^{E} = \begin{pmatrix} E-v_{n} & -1 \\ 1 & 0 \end{pmatrix}
\end{equation*}
is usually referred to as Schr\"odinger cocycle map.

It is well known that a Schr\"odinger operator can be viewed as a tridiagonal bi-infinite matrix. Let $P_{[a,b]}$ denote the projection $\ell^{2}(\Z) \rightarrow \ell^{2}([a,b])$ and define the restriction of Schr\"odinger operator by
\begin{equation}{\label{restriction}}
H_{v,[a,b]} = (P_{[a,b]})^{*} H_{v} P_{[a,b]}
\end{equation}
where $a < b$ and $a,b \in \Z$.
Then the restriction of Schr\"odinger operator on interval $[1,n]$ is equivalent to 
\begin{equation*}
    H_{v,[1,n]}=
    \begin{pmatrix}
        v_{1} & 1 & 0 & \cdots & 0\\
        1 & v_{2} & 1 & \cdots & 0\\
        0 & 1 & v_{3} & \cdots & 0\\
        \vdots & \vdots & \vdots & \ddots & \vdots\\
        0 & 0 & 0 & \cdots & v_{n}
    \end{pmatrix}
\end{equation*}
where $\{v_{j}\}_{j \in \mathbb{Z}}$ are the potentials and the n-step transfer matrix is given by
\begin{equation*}
    M_{n}^{E} = \prod_{j=n}^{1}
                    \begin{pmatrix}
                        E - v_{j}  & -1 \\
                           1      & 0  
                    \end{pmatrix}.
\end{equation*}

By induction, it's not hard to get the following formula which is well known

\begin{equation}{\label{sformula}}
    M_{n}^{E} = 
      \begin{pmatrix}
         \det(E-H_{v,[1,n]})  & -\det(E-H_{v,[2,n]}) \\
            \det(E-H_{v,[1,n-1]})     &  -\det(E-H_{v,[2,n-1]})  
      \end{pmatrix}.
\end{equation}

Next, before giving the CMV analog of this formula, we need some general settings about CMV matrices.

\subsection{Definitions and notations related to CMV matrices}
In this section, we will introduce CMV matrix in view of orthogonal polynomial on the unit circle. 

Let the probability measure $\mu$ on the unit circle $\partial\mathbb{D}$ be \emph{nontrivial}, which means its support contains infinitely many points and denote the monic orthogonal polynomials by $\Phi_{n}(z)$.

\begin{defi}
For any polynomial $Q_{n}(z)$ of degree $n$, define the \emph{reversed polynomial}  $Q^{*}_{n}(z)$ (also called \emph{Szeg\H o dual} \cite{cmvfive})  by the following equation,
\begin{equation*}
    Q^{*}_{n}(z)=z^{n} \overline{Q_{n}(1/\bar{z})},
\end{equation*}
Specially, for $z \in \partial\mathbb{D}$,
$$Q_{n}^{*}(z)=z^{n} \overline{Q_{n}(z)}.$$

\end{defi}

Then the \emph{Szeg\H o recurrence} is given by
\begin{equation*}
    \Phi_{n+1}(z) = z \Phi_{n}(z) - \bar{\alpha}_{n} \Phi^{*}_{n}(z)
\end{equation*}
where the parameters $\alpha_{0}, \alpha_{1}, \cdots$ are called \emph{Verblunsky coefficients} and they are all in the unit disc $\mathbb{D} = \{ z \in \mathbb{C}: \lvert z \rvert < 1 \}$.

The \emph{half-line CMV matrix} associated with Verblunsky coefficients $\{\alpha_{n}\}_{n \in \mathbb{N}}$ is given by
\begin{equation*}
    \mathcal{C}=
    \begin{pmatrix}
         \bar{\alpha}_{0} &  \bar{\alpha}_{1} \rho_{0}     &  \rho_{1} \rho_{0}               &                 &             &             &   \\
         \rho_{0}         &  -\bar{\alpha}_{1} \alpha_{0}  & -\rho_{1} \alpha_{0}             &                 &                &      &   \\
                          &  \bar{\alpha}_{2} \rho_{1}     & -\bar{\alpha}_{2} \alpha_{1}     &  \bar{\alpha}_{3} \rho_{2} & \rho_{3} \rho_{2} &        &   \\
                          &  \rho_{2} \rho_{1}             & -\rho_{2} \alpha_{1}             & -\bar{\alpha}_{3} \alpha_{2}  &  -\rho_{3} \alpha_{2}&     &  \\
                   &       &        &  \bar{\alpha}_{4} \rho_{3}  &  -\bar{\alpha}_{4} \alpha_{3}  & \bar{\alpha}_{5}\rho_{4} &     \\
                   &       &        &  \rho_{4}\rho_{3} & -\rho_{4}\alpha_{3}  & -\bar{\alpha}_{5}\alpha_{4}   &     \\
                     &    &   &    & \ddots  & \ddots  & \ddots  
    \end{pmatrix}
\end{equation*}
where $\rho_{n} = (1-|\alpha_{n}|^{2} )^{1/2}$, so it defines a unitary operator in $\ell^{2}(\mathbb{N})$. 

According to Verblunsky's theorem (also called Favard's theorem for the circle), the map $\mu \rightarrow \{\alpha_{n}\}_{n \in \mathbb{N}}$ sets up a one-one correspondence between the set of nontrivial probability measures on $\pD$ and $\times_{j=0}^{\infty}\mathbb{D}$.

Similarly, an \emph{extended CMV matrix} is a unitary operator on $\ell^{2}(\mathbb{Z})$ defined by a bi-infinite sequence $\{\alpha_{n}\}_{n \in \mathbb{Z}} \subset \mathbb{D}$,
\begin{equation*}
    \mathcal{E}=
    \begin{pmatrix}
         \ddots & \ddots & \ddots &     &   &    &   &     \\
                & -\bar{\alpha}_{0}\alpha_{-1}  & \bar{\alpha}_{1}\rho_{0}  & \rho_{1}\rho_{0} &  &   &   &    \\
                & -\rho_{0}\alpha_{-1}  & -\bar{\alpha}_{1}\alpha_{0}  &  -\rho_{1}\alpha_{0} &  &   &    &      \\
                &   & \bar{\alpha}_{2}\rho_{1} & -\bar{\alpha}_{2}\alpha_{1}  & \bar{\alpha}_{3}\rho_{2}  & \rho_{3}\rho_{2} &       &           \\
                &   & \rho_{2}\rho_{1} & -\rho_{2}\alpha_{1}  & -\bar{\alpha}_{3}\alpha_{2}  &  -\rho_{3}\alpha_{2} &       &               \\
                &   &   &    & \bar{\alpha}_{4}\rho_{3} & -\bar{\alpha}_{4}\alpha_{3}  & \bar{\alpha}_{5}\rho_{4}  &         \\
                &   &   &    & \rho_{4}\rho_{3} & -\rho_{4}\alpha_{3}  & -\bar{\alpha}_{5}\alpha_{4}  &                \\
                &   &   &    &    &  \ddots & \ddots & \ddots
    \end{pmatrix}
\end{equation*}

In the expressions of $\CC$ and $\CE$, all unspecified matrix entries are implicitly assumed to be zero.

\subsection{Szeg\H o cocycle map}

Recall Szeg\H o recurrence, it's equivalent to the following one,
\begin{equation}{\label{szegorecursion}}
    \rho_{n} \varphi_{n+1}(z) = z \varphi_{n}(z) - \bar{\alpha}_{n}\varphi_{n}^{*}(z)
\end{equation}
where
\begin{equation*}
    \varphi_{n}(z) = \frac{\Phi_{n}(z)}{\lVert \Phi_{n}(z) \rVert}.
\end{equation*}

Apply $^{*}$ to both sides of (\ref{szegorecursion}), then
\begin{equation}{\label{szegorecursion2}}
    \rho_{n} \varphi_{n+1}^{*}(z) =  \varphi_{n}^{*}(z) - \alpha_{n} z\varphi_{n}(z).
\end{equation}
Equations (\ref{szegorecursion}) and (\ref{szegorecursion2}) can be written as
\begin{equation*}
    \begin{pmatrix}
        \varphi_{n+1} \\
        \varphi_{n+1}^{*}
    \end{pmatrix}= S^{z}_{\alpha_{n}}
    \begin{pmatrix}
        \varphi_{n} \\
        \varphi_{n}^{*}
    \end{pmatrix}
\end{equation*}
where 
\begin{equation*}
    S^{z}_{\alpha_{n}}=\frac{1}{\rho_{n}}\begin{pmatrix}
              z     &   -\bar{\alpha}_{n} \\
              -\alpha_{n} z & 1
    \end{pmatrix}
\end{equation*}
is the {\it Szeg\H o cocycle map} and the $n$-step transfer matrix is defined by
\begin{equation}{\label{nstep}}
    S^{z}_{n} = \prod \limits_{j=n-1}^{0} S^{z}_{\alpha_{n}}.
\end{equation}

For $X \in \{\mathcal{C}, \mathcal{E}\}$, let's define $X_{[a,b]}=(P_{[a,b]})^{*} X P_{[a,b]}$ and $\varphi^{X,z}_{[a,b]} = \det(z-X_{[a,b]})$, then the CMV analogs of formula (\ref{sformula}) are as follows 

\begin{Theorem}[Half-line case]{\label{halflinecase}}
For $z \in \pD$, we have 
    \begin{equation}{\label{formulahalf}}
        S^{z}_{n}=\prod\limits_{j=0}^{n-1}\frac{1}{\rho_{j}}
        \begin{bmatrix}
            z\varphi^{\CC,z}_{[1,n-1]}                                     &      \varphi^{\CC,z}_{[0,n-1]}-z\varphi^{\CC,z}_{[1,n-1]}\\
            z(\varphi^{\CC,z}_{[0,n-1]}-z\varphi^{\CC,z}_{[1,n-1]})^{*}        &     (\varphi^{\CC,z}_{[1,n-1]})^{*}
        \end{bmatrix}
    \end{equation}
\end{Theorem}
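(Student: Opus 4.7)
The plan is induction on $n$, matching the obvious multiplicative structure of the left-hand side. For the base case $n=1$, I adopt the convention $\varphi^{\CC,z}_{[1,0]}:=1$ (empty determinant). Substituting $\varphi^{\CC,z}_{[0,0]}=z-\bar\alpha_{0}$ into the right-hand side of (\ref{formulahalf}) collapses it to $\tfrac{1}{\rho_{0}}\begin{pmatrix} z & -\bar\alpha_{0}\\ -\alpha_{0}z & 1\end{pmatrix}$, which is exactly $S^{z}_{1}=S^{z}_{\alpha_{0}}$.

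For the inductive step, I assume the formula for $n$ and use $S^{z}_{n+1}=S^{z}_{\alpha_{n}}S^{z}_{n}$. Carrying out the left multiplication by $\tfrac{1}{\rho_{n}}\begin{pmatrix} z & -\bar\alpha_{n}\\ -\alpha_{n}z & 1\end{pmatrix}$ and matching against the claimed formula at index $n+1$ reduces the task to the pair of polynomial identities
\begin{equation*}
\varphi^{\CC,z}_{[1,n]}=z\,\varphi^{\CC,z}_{[1,n-1]}-\bar\alpha_{n}\bigl(\varphi^{\CC,z}_{[0,n-1]}-z\,\varphi^{\CC,z}_{[1,n-1]}\bigr)^{*},
\end{equation*}
\begin{equation*}
\varphi^{\CC,z}_{[0,n]}-z\,\varphi^{\CC,z}_{[1,n]}=z\bigl(\varphi^{\CC,z}_{[0,n-1]}-z\,\varphi^{\CC,z}_{[1,n-1]}\bigr)-\bar\alpha_{n}\bigl(\varphi^{\CC,z}_{[1,n-1]}\bigr)^{*}.
\end{equation*}
The two entries in the second row then match automatically: applying $^{*}$ to each identity (at degree $n$) and using the elementary fact that $(Q^{*})^{*}=zQ$ when $Q$ has degree $\le n-1$ dualized at degree $n$, together with $(zQ)^{*}=Q^{*}$, produces exactly the $(2,1)$ and $(2,2)$ expressions. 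So the real content is just this pair of recursions among characteristic polynomials of nested CMV truncations.

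To establish the two recursions I would perform cofactor expansions along the last row and column of $z-\CC_{[0,n]}$ and $z-\CC_{[1,n]}$, exploiting the pentadiagonal form of $\CC$. The boundary entries picked up in passing from index $n-1$ to $n$ involve only $\alpha_{n}$ together with the factors $\rho_{n-1},\rho_{n}$, which are absorbed by the reciprocal products $\prod_{j=0}^{n-1}\rho_{j}^{-1}$. The Szeg\H o dual $(\,\cdot\,)^{*}$ appears because the complementary cofactor is a determinant of a truncation whose indexing range is effectively reversed, and the identity $Q^{*}(z)=z^{\deg Q}\overline{Q(1/\bar z)}$ captures precisely this reversal.

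The main obstacle is the parity dependence built into the CMV structure: the off-diagonal entries $\CC_{n,n\pm 1}$ take different explicit forms according to whether $n$ is even or odd, since the standard factorization $\CC=\CL\CM$ groups consecutive indices into $2\times 2$ Verblunsky blocks that are offset by one in the two factors. I would handle this either by splitting the induction into two parallel parity cases, in each of which I track explicitly which Verblunsky entries are exposed at the new boundary; or, more uniformly, by working directly with the factorization $\CC=\CL\CM$ and expanding $\det(z-\CL_{[\cdot]}\CM_{[\cdot]})$ block by block, which should sidestep the case split. The latter is the route I would try first.
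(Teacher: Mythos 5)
Your reduction is sound, and it is a genuinely different strategy from the paper's: with the convention $\varphi^{\CC,z}_{[1,0]}=1$ the base case collapses to $S^{z}_{\alpha_{0}}$ exactly as you say, and since $S^{z}_{n+1}=S^{z}_{\alpha_{n}}S^{z}_{n}$, the rules $(zQ)^{*}=Q^{*}$ and $(Q^{*})^{*}=zQ$ (duals taken at degrees $n$ and $n-1$) do make the second row follow from the first, so the theorem is equivalent to your two displayed recursions, which are indeed true. By contrast the paper does no induction at all: it quotes Simon's expression of $S^{z}_{n}$ through the Wall polynomials $A_{n-1},B_{n-1}$ (OPUC Part 1, Section 3.2) together with $\Phi_{n}(z)=\det(z-\CC_{[0,n-1]})$, reduces everything to the single identity $\Phi_{n}(z)+\Psi_{n}(z)=2z\det(z-\CE_{[1,n-1]})$, and proves that via the sign-flipped matrices $\CC'$, $\CE'$ (Lemmas 3.2 and 3.3).

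The genuine gap is that your two recursions carry the entire content of the theorem and are never proved; the mechanism you offer for them does not hold up. A cofactor expansion of $\det(z-\CC_{[0,n]})$ or $\det(z-\CC_{[1,n]})$ along the last row and column yields, with coefficients built only from the boundary data $\alpha_{n-1},\alpha_{n},\rho_{n-1},\rho_{n}$, combinations of minors of the same matrix (this is the kind of multi-term expansion the paper writes in the proof of Lemma 3.3). Your justification that the Szeg\H o dual appears because ``the complementary cofactor is a determinant of a truncation whose indexing range is effectively reversed'' is false: every truncation $\CC_{[a,b]}$ is a contraction, so its characteristic polynomial has all zeros in $\overline{\D}$, whereas the dual $(\varphi^{\CC,z}_{[1,n-1]})^{*}$ has its zeros at the reflections through $\pD$ of points of $\D$, hence outside the closed disc; so the dual is not the characteristic polynomial of any reversed-index truncation. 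In small cases one can check what actually happens: the dual emerges only after resumming several minors and using $|\alpha_{j}|^{2}+\rho_{j}^{2}=1$ at the interior indices (already for $n=2$, $\bar\alpha_{2}(\alpha_{1}z+\alpha_{0})$ is assembled from $\bar\alpha_{2}\alpha_{1}z+\bar\alpha_{2}|\alpha_{1}|^{2}\alpha_{0}+\bar\alpha_{2}\rho_{1}^{2}\alpha_{0}$), and proving that this collapse occurs for all $n$ is precisely the nontrivial identity your sketch takes for granted — essentially equivalent to $\Phi_{n}(z)=\det(z-\CC_{[0,n-1]})$ and its second-kind analogue. To complete your route you would have to prove the two recursions by a genuine argument, e.g.\ a joint induction that also tracks reflected determinants of the type $\det(I-z\overline{\CC_{[a,b]}})$ (with the parity or $\CL\CM$ bookkeeping, which is the secondary issue), or else import the identifications $\Phi_{n}=\det(z-\CC_{[0,n-1]})$, $\Psi_{n}=\det(z-\CC'_{[0,n-1]})$ and the Wall-polynomial formula — at which point you have essentially rejoined the paper's proof.
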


\begin{Theorem}[Extended case]{\label{extendedcase}}
For $z \in \pD$, we have 
    \begin{equation}{\label{etd1}}
    S^{z}_{n} = \prod\limits_{j=0}^{n-1}\frac{1}{\rho_{j}}
        \begin{bmatrix}
            z\varphi^{\CE,z}_{[1,n-1]}                &    \frac{z\varphi^{\CE,z}_{[1,n-1]}-\varphi^{\CE,z}_{[0,n-1]}}{\alpha_{-1}}\\
            z(\frac{z\varphi^{\CE,z}_{[1,n-1]}-\varphi^{\CE,z}_{[0,n-1]}}{\alpha_{-1}})^{*}   &    (\varphi^{\CE,z}_{[1,n-1]})^{*}
        \end{bmatrix}
    \end{equation}
\end{Theorem}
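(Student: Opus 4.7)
The plan is to deduce Theorem~\ref{extendedcase} directly from Theorem~\ref{halflinecase} by establishing an explicit algebraic relation between the characteristic polynomials $\varphi^{\CC,z}_{[0,n-1]}$ and $\varphi^{\CE,z}_{[0,n-1]}$. Both theorems describe the same object $S^z_n$, so it suffices to compare the right-hand sides of \eqref{formulahalf} and \eqref{etd1} entry by entry. Moreover, in each formula the bottom row is obtained from the top row by applying $^*$ and multiplying by $z$, so matching the top-row entries as polynomials in $z$ will automatically match the bottom-row entries as well, via the rule $(P/\alpha)^{*}=P^{*}/\bar\alpha$.

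The starting structural observation is that the truncations $\CC_{[0,n-1]}$ and $\CE_{[0,n-1]}$ differ only in the two entries
\[
\CC_{0,0}=\bar\alpha_0,\quad \CC_{1,0}=\rho_0,\qquad \CE_{0,0}=-\bar\alpha_0\alpha_{-1},\quad \CE_{1,0}=-\rho_0\alpha_{-1},
\]
because every CMV entry at a position with both indices $\ge 1$ is determined by $\alpha_0,\ldots,\alpha_{n-1}$ alone. Consequently $\CC_{[1,n-1]}=\CE_{[1,n-1]}$, hence $\varphi^{\CC,z}_{[1,n-1]}=\varphi^{\CE,z}_{[1,n-1]}$, and the $(1,1)$ entries of the two formulas already agree.

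To compare the $(1,2)$ entries, I would expand $\det(z-\CC_{[0,n-1]})$ and $\det(z-\CE_{[0,n-1]})$ by cofactors along the first column, which has at most two nonzero entries in each case. The $(0,0)$-minor is $z-\CC_{[1,n-1]}$ in both expansions, and the $(1,0)$-minor is literally the same matrix in both: deleting row~$1$ and column~$0$ leaves a row~$0$ equal to $(-\bar\alpha_1\rho_0,-\rho_1\rho_0,0,\ldots,0)$ on either side, while rows of index~$\ge 2$ are unaffected. Denoting this common cofactor by $d$, one obtains
\[
\varphi^{\CC,z}_{[0,n-1]}=(z-\bar\alpha_0)\,\varphi^{\CC,z}_{[1,n-1]}+\rho_0\,d,\qquad \varphi^{\CE,z}_{[0,n-1]}=(z+\bar\alpha_0\alpha_{-1})\,\varphi^{\CC,z}_{[1,n-1]}-\rho_0\alpha_{-1}\,d,
\]
and eliminating $\rho_0 d$ between them yields
\[
\alpha_{-1}\bigl(\varphi^{\CC,z}_{[0,n-1]}-z\varphi^{\CC,z}_{[1,n-1]}\bigr)=z\varphi^{\CE,z}_{[1,n-1]}-\varphi^{\CE,z}_{[0,n-1]}.
\]
This is precisely the identity that converts the $(1,2)$ entry of \eqref{formulahalf} into the $(1,2)$ entry of \eqref{etd1}; applying $^*$ to both sides then produces the $(2,1)$ entries, using that $\alpha_{-1}^{*}=\bar\alpha_{-1}$.

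The step where I expect to have to be careful is the cofactor book-keeping — specifically the claim that the $(1,0)$-minors of $z-\CC_{[0,n-1]}$ and $z-\CE_{[0,n-1]}$ coincide as matrices. This rests on direct inspection of the displayed forms of $\CC$ and $\CE$, confirming that changing only the entries at positions $(0,0)$ and $(1,0)$ leaves every other entry of the truncation untouched. Once this verification is in hand, the remainder of the proof is the short linear manipulation above, followed by a routine application of the $^*$-operation.
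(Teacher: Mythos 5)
Your proposal is correct and follows essentially the same route as the paper: the paper also proves the extended case by reducing it to Theorem~\ref{halflinecase}, using the same first-column cofactor expansions of $\det(z-\CC_{[0,n-1]})$ and $\det(z-\CE_{[0,n-1]})$ (your common minor $d$ is the paper's matrix $\mathcal{P}_{n-1}$) and eliminating $\rho_0 d$ to obtain exactly the identity $\alpha_{-1}\bigl(\varphi^{\CC,z}_{[0,n-1]}-z\varphi^{\CC,z}_{[1,n-1]}\bigr)=z\varphi^{\CE,z}_{[1,n-1]}-\varphi^{\CE,z}_{[0,n-1]}$, i.e.\ equation (\ref{edhe}). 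Your structural observation that the truncations differ only in the $(0,0)$ and $(1,0)$ entries, so that the $(1,0)$-minors coincide, is precisely the verification underlying the paper's computation.
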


\begin{remark}
One may notice the term $\frac{1}{\alpha_{-1}}$ in (\ref{etd1}) which means there might be a problem in the case $\alpha_{-1}=0$. But we should mention that the numerator of $S_{n}^{z}(1,2)$ contains a factor $\alpha_{-1}$ and from our proof one can see that the term $S_{n}^{z}(1,2)$ doesn't depend on $\alpha_{-1}$.

     Actually, Theorem \ref{halflinecase} and Theorem \ref{extendedcase} are equivalent, we will first prove the equivalency of them, and then give the proof of Theorem \ref{halflinecase}, since its proof needs some additional preparations.
\end{remark}

\section{Proof}

\begin{proof}[Proof of the equivalency]
Notice the special forms of CMV matrices $\CC$ and $\CE$, it is not hard to see $\CE_{[a,b]}=\mathcal{C}_{[a,b]}$, for $1 \leq a < b$, hence $\CE_{[1,n-1]}=\CC_{[1,n-1]}$. 

Compare the corresponding entries in the matrices from Theorem \ref{halflinecase} and Theorem \ref{extendedcase}, then the equivalency can be got immediately if the following equation holds
\begin{equation}{\label{edhe}}
    \det (z-\mathcal{C}_{[0,n-1]})-z \det (z-\mathcal{E}_{[1,n-1]})= \frac{z \det (z-\mathcal{E}_{[1,n-1]})-\det (z-\mathcal{E}_{[0,n-1]})}{\alpha_{-1}}.
\end{equation}
Simple calculations tell us that
\begin{equation*}
\det (z-\mathcal{E}_{[0,n-1]})=(z+\bar{\alpha}_{0} \alpha_{-1})\det(z-\mathcal{E}_{[1,n-1]})-\rho_{0} \alpha_{-1} \det \mathcal{P}_{n-1}
\end{equation*}
\begin{equation*}
\det (z-\mathcal{C}_{[0,n-1]})=(z-\bar{\alpha}_{0}) \det (z-\mathcal{E}_{[1,n-1]})+\rho_{0} \det \mathcal{P}_{n-1}
\end{equation*}
where the matrix $\mathcal{P}_{n-1}$ is given by
\begin{equation*}
\mathcal{P}_{n-1}=
\begin{pmatrix}
          -\bar{\alpha}_{1} \rho_{0}     &  -\rho_{1} \rho_{0}               &               &               &   &        \\
          -\bar{\alpha}_{2} \rho_{1}     & z+\bar{\alpha}_{2} \alpha_{1}     &  -\bar{\alpha}_{3} \rho_{2} & -\rho_{3} \rho_{2}  &   &        \\
          -\rho_{2} \rho_{1}             & \rho_{2} \alpha_{1}             & z+\bar{\alpha}_{3} \alpha_{2}  &  \rho_{3} \alpha_{2} &   &  \\
               &       &  -\bar{\alpha}_{4} \rho_{3}  &  z+\bar{\alpha}_{4} \alpha_{3}  &    &   \\
            &   &    &    & \ddots  &  \\
            &     &     &     &    &  z+\bar{\alpha}_{n-1} \alpha_{n-2}
\end{pmatrix}.
\end{equation*}
These two equations imply that
    \begin{equation*}
        \begin{split}
          z\det(z-\mathcal{E}_{[1,n-1]})-\det(z-\mathcal{E}_{[0,n-1]}) & = -\bar{\alpha}_{0} \alpha_{-1} \det(z-\mathcal{E}_{[1,n-1]})+\rho_{0} \alpha_{-1}\det \mathcal{P}_{n-1} \\
          & = \alpha_{-1} (-\bar{\alpha}_{0} \det (z-\mathcal{E}_{[1,n-1]})+\rho_{0}\det \mathcal{P}_{n-1})  \\ 
          & = \alpha_{-1} (\det (z-\mathcal{C}_{[0,n-1]})-z\det (z-\mathcal{E}_{[1,n-1]}))
        \end{split}
    \end{equation*}
which means (\ref{edhe}) is true and hence Theorem \ref{halflinecase} is equivalent to Theorem \ref{extendedcase}.
\end{proof}

Next, let's recall some definitions and notations from \cite{OPUC1} which are necessary to obtain the proof of Theorem \ref{halflinecase}.
\begin{defi}
\rm Let $\{\alpha_{n}\}_{n \in \mathbb{Z}}$ be a set of Verblunsky coefficients and $\lambda \in  \partial \mathbb{D}$. We define $\Phi^{\lambda}_{n}$ by
\begin{equation*}
    \Phi^{\lambda}_{n}(z;d\mu)=\Phi^{\lambda}_{n}(z;d\mu_{\lambda})
\end{equation*}
with $d\mu_{\lambda}$, the \emph{Aleksandrov measures}, defined by
\begin{equation*}
    \alpha_{n}(d\mu_{\lambda})=\lambda \alpha_{n}(d\mu)
\end{equation*}

Specially, for the case $\lambda = -1$,
\begin{equation*}
    \Psi_{n}(z;d\mu)=\Phi^{\lambda = -1}_{n}(z;d\mu)
\end{equation*}
are called the \emph{second kind of polynomials} for $\mu$.
\end{defi}

Let $\mathcal{C}'$ denote the CMV matrix whose Verblunsky coefficients replaced by $\{-\alpha_{n}\}_{n \in \mathbb{N}}$, that is
\begin{equation*}
\mathcal{C}'=
\begin{pmatrix}
         -\bar{\alpha}_{0}  &  -\bar{\alpha}_{1} \rho_{0}     &  \rho_{1} \rho_{0}               &          &       &   &   \\
         \rho_{0} &  -\bar{\alpha}_{1} \alpha_{0}  & \rho_{1} \alpha_{0}             &               &            &   &   \\
                         &  -\bar{\alpha}_{2} \rho_{1}     & -\bar{\alpha}_{2} \alpha_{1}     &  -\bar{\alpha}_{3} \rho_{2} & \rho_{3} \rho_{2} &  &   \\
                         &  \rho_{2} \rho_{1}             & \rho_{2} \alpha_{1}             & -\bar{\alpha}_{3} \alpha_{2}  &  \rho_{3} \alpha_{2} &   &   \\
                  &      &       &  -\bar{\alpha}_{4} \rho_{3}  &  -\bar{\alpha}_{4} \alpha_{3}  & -\bar{\alpha}_{5}\rho_{4} &     \\
                   &      &       &  \rho_{4}\rho_{3} & \rho_{4}\alpha_{3}  & -\bar{\alpha}_{5}\alpha_{4}     &     \\
                   &    &   &    & \ddots  & \ddots   & \ddots
\end{pmatrix}
\end{equation*}

\noindent The corresponding extened CMV matrix is

\begin{equation*}
    \mathcal{E}'=
    \begin{pmatrix}
         \ddots & \ddots & \ddots &     &   &    &   &     \\
                & -\bar{\alpha}_{0}\alpha_{-1}  & -\bar{\alpha}_{1}\rho_{0}  & \rho_{1}\rho_{0} &  &   &   &    \\
                & \rho_{0}\alpha_{-1}  & -\bar{\alpha}_{1}\alpha_{0}  &  \rho_{1}\alpha_{0} &  &   &    &      \\
                &   & -\bar{\alpha}_{2}\rho_{1} & -\bar{\alpha}_{2}\alpha_{1}  & -\bar{\alpha}_{3}\rho_{2}  & \rho_{3}\rho_{2} &       &           \\
                &   & \rho_{2}\rho_{1} & \rho_{2}\alpha_{1}  & -\bar{\alpha}_{3}\alpha_{2}  &  \rho_{3}\alpha_{2} &       &               \\
                &   &   &    & -\bar{\alpha}_{4}\rho_{3} & -\bar{\alpha}_{4}\alpha_{3}  & -\bar{\alpha}_{5}\rho_{4}  &         \\
                &   &   &    & \rho_{4}\rho_{3} & \rho_{4}\alpha_{3}  & -\bar{\alpha}_{5}\alpha_{4}  &                \\
                &   &   &    &    &  \ddots & \ddots & \ddots
    \end{pmatrix}
\end{equation*}

\begin{lemma}{\label{lskp}}
Let $\Psi_{n}(z)$ be the second kind of polynomial and $\mathcal{C}'_{[0,n-1]}$ denote the restriction of $\mathcal{C}'$, then
    \begin{equation*}
        \Psi_{n}(z) = \det(z-\mathcal{C}'_{[0,n-1]}).
    \end{equation*}
\end{lemma}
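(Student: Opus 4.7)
The plan is to first reduce the statement to the following general identity: for any half-line CMV matrix $\mathcal{C}$ with Verblunsky coefficients $\{\beta_n\}_{n \in \N}$ and corresponding monic orthogonal polynomials $\Phi_n(z)$, one has
\begin{equation*}
\Phi_n(z) = \det(z - \mathcal{C}_{[0,n-1]}).
\end{equation*}
Given this identity, the lemma follows by specialization. By construction, $\mathcal{C}'$ is the CMV matrix whose Verblunsky coefficients are $\{-\alpha_n\}_{n\in\N}$, and these are, by the very definition of the Aleksandrov measure, the Verblunsky coefficients of $d\mu_{-1}$. The corresponding monic orthogonal polynomial $\Phi_n(z;d\mu_{-1})$ is therefore $\Psi_n(z)$, and the general identity applied to $\mathcal{C}'$ yields $\Psi_n(z)=\det(z-\mathcal{C}'_{[0,n-1]})$.

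To establish the general identity I would argue by induction on $n$. The base cases $n = 1$ (where $\mathcal{C}_{[0,0]}=\bar\beta_0$ and $\Phi_1(z)=z-\bar\beta_0$) and $n=2$ are immediate direct computations that match the Szeg\H o recursion with initial data $\Phi_0=\Phi_0^*=1$. For the inductive step, expand $\det(z-\mathcal{C}_{[0,n-1]})$ along the last row. Because of the two-periodic $5$-diagonal structure of $\mathcal{C}$, the last row of the $n\times n$ truncation has only two or three nonzero entries depending on the parity of $n-1$; in either parity the cofactor expansion produces $\det(z-\mathcal{C}_{[0,n-2]})$ together with a minor which, using $|\beta_k|^2+\rho_k^2=1$, simplifies to a $\rho$-multiple of $\Phi_{n-2}^*(z)$. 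Invoking the Szeg\H o recursions $\Phi_n=z\Phi_{n-1}-\bar\beta_{n-1}\Phi_{n-1}^*$ and $\Phi_n^*=\Phi_{n-1}^*-\beta_{n-1}z\Phi_{n-1}$ on the inductive hypothesis, one checks by elementary manipulation that the cofactor expression is precisely $\Phi_n(z)$, completing the step.

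The main obstacle is the parity-driven bookkeeping: the cofactor expansion takes two different shapes according to whether $n-1$ is even or odd, and in each case one must correctly identify the auxiliary minor with a multiple of $\Phi_{n-2}^*$ by telescoping the bottom-right part of $z-\mathcal{C}_{[0,n-1]}$ and collapsing it via the relation $|\beta|^2+\rho^2=1$. Once this identification is made, the final algebraic reconciliation with $\Phi_n$ is a short manipulation of the two Szeg\H o recursions, and the identity $\Phi_n(z)=\det(z-\mathcal{C}_{[0,n-1]})$ — hence the lemma — is established.
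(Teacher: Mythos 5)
Your proposal is correct and takes essentially the same route as the paper: the paper also reduces the lemma to the identity $\Phi_{n}(z)=\det(z-\mathcal{C}_{[0,n-1]})$ and observes that $\Psi_{n}$ is the monic orthogonal polynomial for the sign-flipped coefficients $\{-\alpha_{n}\}$, which are by construction the Verblunsky coefficients of $\mathcal{C}'$; the only difference is that the paper simply cites this identity (Theorem 5.3 of Simon's \emph{OPUC on one foot}) rather than re-proving it. If you keep your inductive proof, note that identifying the auxiliary minor with a $\rho$-multiple of $\Phi_{n-2}^{*}$ is not an instance of the hypothesis $\det(z-\mathcal{C}_{[0,k-1]})=\Phi_{k}$, so the induction should be strengthened to carry that minor identity along (equivalently, to prove the closed form $\det(z-\mathcal{C}_{[0,n-1]})=(z+\bar{\alpha}_{n-1}\alpha_{n-2})\Phi_{n-1}(z)-\bar{\alpha}_{n-1}\rho_{n-2}^{2}\Phi_{n-2}^{*}(z)$, after which the Szeg\H o recursions give $\Phi_{n}$).
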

\begin{proof}
    It is known that (Theorem 5.3 of \cite{onefoot})
    \begin{equation*}
        \Phi_{n}(z) = \det(z-\mathcal{C}_{[0,n-1]})
    \end{equation*}

According to the definition of second kind of polynomial, the Verblunsky coefficients in $\Phi_{n}(z)$ and $\Psi_{n}(z)$ have opposite signs. Notice the way we define matrix $\mathcal{C}'$, this Lemma follows by Theorem 5.3 in \cite{onefoot}.
\end{proof}

Recall the definition of $\mathcal{P}_{n-1}$ in the proof of Theorem \ref{extendedcase}, replace $\{\alpha_{j}\}_{0<j<n}$ by $\{-\alpha_{j}\}_{0<j<n}$, we get
\begin{equation*}
\mathcal{P}'_{n-1}=
\begin{pmatrix}
          \bar{\alpha}_{1} \rho_{0}     &  -\rho_{1} \rho_{0}               &                &                &   &        \\
          \bar{\alpha}_{2} \rho_{1}     & z+\bar{\alpha}_{2} \alpha_{1}     &  \bar{\alpha}_{3} \rho_{2} & -\rho_{3} \rho_{2}  &   &       \\
          -\rho_{2} \rho_{1}             & -\rho_{2} \alpha_{1}             & z+\bar{\alpha}_{3} \alpha_{2}  &  -\rho_{3} \alpha_{2} &   & \\
               &       &  \bar{\alpha}_{4} \rho_{3}  &  z+\bar{\alpha}_{4} \alpha_{3}  &   &  \\
            &   &    &    & \ddots  &  \\
           &    &    &    &   &  z+\bar{\alpha}_{n-1} \alpha_{n-2}
\end{pmatrix}
\end{equation*}

\begin{lemma}{\label{lfsp}}
Let $\mathcal{E}_{[1,n-1]}$, $\mathcal{E}'_{[1,n-1]}$, $\mathcal{P}_{n-1}$ and $\mathcal{P}'_{n-1}$ be as above, then we have
\begin{equation*}
    \det(z-\mathcal{E}_{[1,n-1]})=\det(z-\mathcal{E}'_{[1,n-1]})
\end{equation*}
\begin{equation*}
    \det\mathcal{P}_{n-1} = -\det\mathcal{P}'_{n-1}.
\end{equation*}
\end{lemma}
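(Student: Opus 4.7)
My plan is to handle both statements by the same device: construct explicit diagonal signature matrices ($\pm 1$ entries on the diagonal) that intertwine each unprimed matrix with its primed counterpart. The guiding observation is that every entry of $\CE_{[1,n-1]}$ and of $\CP_{n-1}$ is a monomial (or a short sum of terms of common $\alpha$-parity) in $z, \rho_j, \alpha_j, \bar\alpha_j$, so the substitution $\alpha_j \mapsto -\alpha_j$ either flips the entry's sign or leaves it invariant according to whether its total $(\alpha, \bar\alpha)$-degree is odd or even. Both claimed identities then reduce to realizing the resulting flip-pattern $\epsilon_{ij} \in \{\pm 1\}$ as $d_i \, d'_j$ for diagonal signatures $D, D'$ and checking the product $\det D \cdot \det D'$.

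\smallskip

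\emph{Part 1.} I will first invoke the observation already made in the equivalency proof, $\CE_{[a,b]} = \CC_{[a,b]}$ for $1 \le a$, to reduce both sides to $\det(z - \CC_{[1,n-1]})$ and $\det(z - \CC'_{[1,n-1]})$. A direct inspection of the CMV entries shows that within the restricted block the $\alpha$-odd (flip) entries are exactly the sub- and superdiagonal positions $(j, j \pm 1)$, while the diagonal and the $(j, j \pm 2)$ entries each carry either zero or two $\alpha$'s and are invariant. Hence $D = \mathrm{diag}\bigl((-1)^j\bigr)_{j=1}^{n-1}$ satisfies $d_j d_k^{-1} = (-1)^{j+k}$, which equals $-1$ precisely when $|j-k|=1$, so $D\,\CC_{[1,n-1]}\,D^{-1} = \CC'_{[1,n-1]}$ and the two characteristic polynomials agree.

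\smallskip

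\emph{Part 2.} Applying the same inspection to $\CP_{n-1}$ shows its flip-entries are the sub/superdiagonal positions $(j, j \pm 1)$ \emph{together with} the corner $(1,1) = -\bar\alpha_1 \rho_0$ --- the lone $\alpha$-odd entry on the diagonal, arising because row $1$ of $\CP_{n-1}$ was harvested from row $0$ of $z - \CE_{[0,n-1]}$ rather than from a CMV-diagonal entry. Since $d_1^2 = +1$ forbids flipping $(1,1)$ by a single conjugation, I will instead look for two independent diagonal signatures $D, D'$ solving $d_i d'_j = \epsilon_{ij}$ at every nonzero entry of $\CP_{n-1}$. Propagating from $d_2 = 1$, the constraints close up consistently and force
\[
d_1 = 1,\qquad d_j = (-1)^j \ (j \ge 2),\qquad d'_j = (-1)^j \ (j \ge 1).
\]
A direct count then gives
\[
\det D \cdot \det D' \;=\; (-1)^{n(n-1)/2 - 1} \cdot (-1)^{n(n-1)/2} \;=\; (-1)^{n(n-1) - 1} \;=\; -1,
\]
since $n(n-1)$ is always even. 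Therefore $\det \CP'_{n-1} = -\det \CP_{n-1}$.

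\smallskip

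\emph{The hard part} is the sign-bookkeeping in Part 2: checking that the recursive system $d_i d'_j = \epsilon_{ij}$ is genuinely consistent across every nonzero entry of $\CP_{n-1}$ (in particular at the last row, whose support depends on the parity of $n-1$), and tracking the signs far enough to see that the product $\det D \cdot \det D'$ evaluates to $-1$ uniformly in $n$. Once this is carefully verified, both identities drop out by straightforward matrix algebra.
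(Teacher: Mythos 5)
Your proof is correct, but it takes a genuinely different route from the paper's. The paper proves the first identity by induction, expanding $\det(z-\CE_{[k,n-1]})$ along its first column into a multi-term recursion and matching the primed and unprimed expansions term by term, and then obtains $\det\mathcal{P}_{n-1}=-\det\mathcal{P}'_{n-1}$ only indirectly, by comparing the two displayed expansions of $\det(z-\CE_{[0,n-1]})$ and $\det(z-\CE'_{[0,n-1]})$ and invoking the first identity at $j=0$. You instead exhibit explicit diagonal signature equivalences: $D\,\CE_{[1,n-1]}D^{-1}=\CE'_{[1,n-1]}$ with $D=\mathrm{diag}\bigl((-1)^j\bigr)$, which works because every nonzero entry of the block lies in the band $\vert i-j\vert\le 2$ and carries an odd number of $\alpha$'s exactly when $\vert i-j\vert=1$ (the exceptional corner entries $\bar\alpha_0$ and $\rho_0$ are excluded once the block starts at index $1$); and $D\,\mathcal{P}_{n-1}D'=\mathcal{P}'_{n-1}$ with the signatures you list, where the lone $\alpha$-odd diagonal entry $-\bar\alpha_1\rho_0$ rules out a one-sided conjugation and produces the overall factor $\det D\cdot\det D'=-1$. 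This buys a cleaner, more structural argument: it avoids the induction and the unproved multi-term expansion the paper relies on, and it gives at once the more general statement $\det(z-\CE_{[a,b]})=\det(z-\CE'_{[a,b]})$ for every block not containing the half-line corner (including $a=0$ for the extended matrix, which is exactly the fact the paper needs for its second step). Moreover, the sign bookkeeping you flag as the hard part is in fact immediate: once the signatures are written down, verifying $d_id'_j=\epsilon_{ij}$ is a finite inspection of the five bands of the displayed matrices, the truncated last row only imposing a subset of those constraints, and the determinant count collapses to $\det D\cdot\det D'=\prod_j d_jd'_j=d_1d'_1=-1$ since $d_jd'_j=1$ for all $j\ge2$. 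What the paper's approach buys in exchange is that it stays entirely within the determinant recursions that are reused elsewhere in the proofs of the main theorems.
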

\begin{proof}
    The first equation can be easily proved by induction and the second one follows by a direct calculation. More precisely,

It is easy to check that $\det(z-\mathcal{E}_{[n-2,n-1]})=\det(z-\mathcal{E}'_{[n-2,n-1]})$. For the inductive step, we assume $\det(z-\mathcal{E}_{[j,n-1]})=\det(z-\mathcal{E'}_{[j,n-1]})$ holds for all $k<j<n-1$, then by a direct calculation,
    \begin{equation*}
        \begin{split}
          & \det(z-\mathcal{E}_{[k,n-1]})\\
          & = (z+\bar{\alpha}_{k} \alpha_{k-1})\det(z-\mathcal{E}_{[k+1,n-1]})+\bar{\alpha}_{k+1}\alpha_{k-1} \rho^{2}_{k}\det(z-\mathcal{E}_{[k+2,n-1]})+ \cdots+\bar{\alpha}_{n-1}\alpha_{k-1}\prod\limits_{j=n-2}^{k}\rho^{2}_{j} \\
          & = (z+\bar{\alpha}_{k} \alpha_{k-1})\det(z-\mathcal{E}'_{[k+1,n-1]})+\bar{\alpha}_{k+1}\alpha_{k-1} \rho^{2}_{k}\det(z-\mathcal{E}'_{[k+2,n-1]})+ \cdots+\bar{\alpha}_{n-1}\alpha_{k-1}\prod\limits_{j=n-2}^{k}\rho^{2}_{j}  \\ 
          & = \det(z-\mathcal{E'}_{[k,n-1]}). 
        \end{split}
    \end{equation*}
In particular, take $k = 1$, we get the first equation
\begin{equation*}
    \det(z-\mathcal{E}_{[1,n-1]})=\det(z-\mathcal{E'}_{[1,n-1]}).
\end{equation*}
For the second equation, since $\det(z-\mathcal{E}_{[j,n-1]})=\det(z-\mathcal{E'}_{[j,n-1]})$ holds for all $0 \leq j < n-1$, then
\begin{equation*}
\det (z-\mathcal{E}_{[0,n-1]})=(z+\bar{\alpha}_{0} \alpha_{-1})\det(z-\mathcal{E}_{[1,n-1]})-\rho_{0} \alpha_{-1} \det \mathcal{P}_{n-1}
\end{equation*}
\begin{equation*}
\det (z-\mathcal{E}'_{[0,n-1]})=(z+\bar{\alpha}_{0} \alpha_{-1})\det(z-\mathcal{E}'_{[1,n-1]})+\rho_{0} \alpha_{-1} \det \mathcal{P}'_{n-1}
\end{equation*}
implies $\det\mathcal{P}_{n-1}(z) = -\det\mathcal{P}'_{n-1}(z).$
\end{proof}

\begin{proof}[Proof of Theorem {\rm \ref{halflinecase}}]
     From \cite[Section 3.2]{OPUC1}, we have
    \begin{equation*}
        S^{z}_{n} = \prod\limits_{j=0}^{n-1}\rho^{-1}_{j}
        \begin{bmatrix}
            zB^{*}_{n-1}(z)    &    A^{*}_{n-1}(z)\\
            zA_{n-1}(z)            &    B_{n-1}(z)
        \end{bmatrix}
    \end{equation*}
where $\Phi_{n}(z)=zB^{*}_{n-1}(z)+A^{*}_{n-1}(z)$ and
$$A_{n-1}(z)=\frac{\Phi^{*}_{n}(z)-\Psi^{*}_{n}(z)}{2z}$$
$$B_{n-1}(z)=\frac{\Phi^{*}_{n}(z)+\Psi^{*}_{n}(z)}{2}.$$

Compare this existing result with our Theorem \ref{halflinecase}, it is sufficient to prove 
\begin{equation*}
    z B^{*}_{n-1}(z) = z \det(z-\mathcal{E}_{[1,n-1]})
\end{equation*}
that is, 
\begin{equation}{\label{efsd}}
    \Phi_{n}(z) + \Psi_{n}(z)=2z\det(z-\mathcal{E}_{[1,n-1]}).
\end{equation}
By Lemma \ref{lskp} and some calculations, we have
\begin{equation*}
    \begin{split}
    \Phi_{n}(z)
           & = \det(z-\mathcal{C}_{[0,n-1]}) \\
           & = (z-\bar{\alpha}_{0})\det(z-\mathcal{E}_{[1,n-1]})+\rho_{0}\det\mathcal{P}_{n-1}(z)
    \end{split}
\end{equation*}
and
\begin{equation*}
    \begin{split}
    \Psi_{n}(z)
           & = \det(z-\mathcal{C}'_{[0,n-1]}) \\
           & = (z+\bar{\alpha}_{0})\det(z-\mathcal{E}'_{[1,n-1]})+\rho_{0}\det\mathcal{P}'_{n-1}(z)
    \end{split}
\end{equation*}
Applying Lemma \ref{lfsp}, it is obvious that (\ref{efsd}) holds and hence Theorem \ref{halflinecase}.

\end{proof}

\section{Application}
In this section, we explain how to get Anderson localization of half-line CMV matrices via the method developed in \cite{BG00} where our formula can play an important role, the extended CMV matrices case can be carried out similarly. 

From now on, we consider a special class of Verblunsky coefficients which are generated by a analytic function $\alpha(x) \in \mathbb{D}$, i.e. $\alpha_{n}(x) = \alpha(x+n\omega)$, where $x, \omega \in \mathbb{T}$.

Recall the Szeg\H o cocycle map which is given by
\begin{equation*}
    S^{z}_{\alpha_{n}}(x)=\frac{1}{\rho_{n}(x)}\begin{pmatrix}
              z     &   -\bar{\alpha}_{n}(x) \\
              -z \alpha_{n}(x)  & 1
    \end{pmatrix}.
\end{equation*}
Since $\det S^{z}_{\alpha_{n}}(x) = z \in \pD$ and the method in \cite{BG00} only cares about the norm of n-step transfer matrix, so it's equivalent to study the following one
\begin{equation*}
    M^{z}_{\alpha_{n}}(x)=\frac{1}{\rho_{n}(x)}\begin{pmatrix}
              \sqrt{z}     &   -\frac{\bar{\alpha}_{n}(x)}{\sqrt{z}} \\
              -\sqrt{z} \alpha_{n}(x)  & \frac{1}{\sqrt{z}}
    \end{pmatrix}
\end{equation*}
and the corresponding n-step transfer matrix
\begin{equation*}
    M^{z}_{n}(x) = \prod \limits_{j=n-1}^{0} M^{z}_{\alpha_{n}}(x).
\end{equation*}
Define
\begin{equation*}
L_{n}(x) = \frac{1}{n} \int_{\mathbb{T}}  \log \| M^{z}_{n}(x) \| dx
\end{equation*}
then the Lyapunov exponent is given by
\begin{equation*}
\text{$L(z)=\inf_{n} L_{n}(z)$ for $n \to \infty$.}
\end{equation*}
\begin{theorem}{\label{cmvandersonlocalization}}
Consider the family $\{\CC_{\omega}\}_{\omega \in \mathbb{T}}$ of half-line CMV matrices with Verblunsky coefficients $\{\alpha_{n}(x)\}_{n \in \mathbb{N}}$ generated by analytic function $\alpha: \T \to \D$, that is, $\alpha_{n}(x)=\alpha(x+n\omega)$  where $x,\omega \in \T$. Assume the Lyapunov exponent $L(z)$ satisfies
\begin{equation*}
\text{$L(z) > \delta_{0} >0$ for all $z \in \mathcal{Z}$ and $\omega \in \mathcal{I}$},
\end{equation*}
where $\CZ \subset \pD$ and $\CI \subset \mathbb{T}$ are compact intervals.

Then for almost every $\omega \in \mathcal{I}$, $\CC_{\omega}(0)$ has pure point spectrum on $\CZ$ with exponentially decaying eigenfunctions (i.e. Anderson localization). 
\end{theorem}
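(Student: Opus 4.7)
The plan is to adapt the Bourgain--Goldstein scheme of \cite{BG00} to the CMV setting, using Theorem \ref{halflinecase} as the bridge from transfer matrix norms to finite-volume determinants of $\CC_\omega$. Since $\alpha:\T\to\D$ is analytic, $u_n(x) := \frac{1}{n}\log\|M^z_n(x)\|$ extends to a subharmonic function on a neighborhood of $\T$ in $\mathbb{C}$ with uniformly bounded $L^\infty$-norm. The Bourgain--Goldstein analysis of Fourier coefficients of subharmonic functions, together with the avalanche principle, should then yield a large deviation estimate
\begin{equation*}
\big|\{x\in\T : |u_n(x) - L_n(z)| > n^{-\sigma}\}\big| < e^{-n^\tau}
\end{equation*}
and the rate estimate $|L_n(z) - L(z)| \lesssim n^{-\sigma}$, both uniform in $(z,\omega)\in\CZ\times\CI$. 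Because $|z|=1$ implies $\|M^z_n\| = \|S^z_n\|$, these bounds apply verbatim to $S^z_n(x)$.

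I would next use Theorem \ref{halflinecase} to translate norm growth into Green's function decay. The entries of $\big(\prod_{j=0}^{n-1}\rho_j\big) S^z_n(x)$ are linear combinations of $\varphi^{\CC,z}_{[0,n-1]}(x)$ and $\varphi^{\CC,z}_{[1,n-1]}(x)$, so the LDT above transfers to upper bounds $e^{(L+\varepsilon)n}$ on these determinants and, off an exponentially small exceptional set in $x$, a matching lower bound $|\varphi^{\CC,z}_{[1,n-1]}(x)| \geq e^{(L-\varepsilon)n}$. Cramer's rule for $(z-\CC_{[1,n]}(x))^{-1}$ then gives the basic exponential off-diagonal decay
\begin{equation*}
|(z-\CC_{[1,n]}(x))^{-1}(i,j)| \leq e^{-\gamma|i-j| + o(n)}, \qquad \gamma = L(z) - o(1),
\end{equation*}
on the \emph{good} boxes where the denominator is not too small.

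The main obstacle is the third step, eliminating double resonances. A box $[a,b]$ is bad at phase $x$ exactly when $\varphi^{\CC,z}_{[a,b]}(x)$ is too small. Because this determinant is analytic in $x$ and polynomial of controlled degree in $e^{i\omega}$, the bad set in $(\omega,x,z)$ admits a semi-algebraic description of polynomially bounded complexity. Combining Fubini with the Cartan estimates and semi-algebraic transversality arguments of \cite{BG00}, applied to pairs of disjoint long boxes with shifted phases $n\omega$ and $m\omega$, should remove a set of $\omega\in\CI$ of small measure on which two such boxes are simultaneously resonant with a common $z\in\CZ$. The hard part is verifying that this machinery, originally tailored to Dirichlet determinants of Schr\"odinger operators, goes through verbatim for the CMV determinants $\varphi^{\CC,z}_{[a,b]}$; it is precisely Theorem \ref{halflinecase} that enables the reduction, by giving the exact analog of the Schr\"odinger formula \eqref{sformula} on which the BGS analysis rests.

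Finally, I would conclude Anderson localization via a standard Schnol--Simon argument. Any polynomially bounded generalized eigenfunction $u$ of $\CC_\omega(0)$ at $z\in\CZ$ can be recovered inside a good box $[a,b]$ by a Poisson-type identity $u(k) = -\sum_{\ell\in\partial[a,b]} (z-\CC_{[a,b]})^{-1}(k,\ell)\cdot(\text{boundary term})$. Combining the exponential decay of the Green's function on all good boxes with the absence of double resonances forces $u$ to decay exponentially, for every generalized eigenvalue $z\in\CZ$, so for almost every $\omega\in\CI$ the operator $\CC_\omega(0)$ has pure point spectrum on $\CZ$ with exponentially decaying eigenfunctions.
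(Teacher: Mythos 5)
Your outline follows essentially the same route as the paper: the Bourgain--Goldstein scheme with the Schr\"odinger formula (\ref{sformula}) replaced by Theorem \ref{halflinecase}, an LDT for the Szeg\H o cocycle (the paper gets it by noting $M^{z}_{n}$ is unitarily conjugate to an analytic ${\rm SL}(2,\mathbb{R})$ cocycle with $\|M_n\|+\|M_n^{-1}\|\leq C^n$), semi-algebraic elimination of double resonances, and decay of a polynomially bounded generalized eigenfunction via Cramer's rule and paving. However, the step you wave through with ``goes through verbatim'' is exactly where a CMV-specific ingredient is needed and is missing from your plan: the elimination of the spectral parameter in the double-resonance lemma. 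In \cite[Lemma 4.1]{BG00} the resonance at the origin, $\|(E-H_{[0,n_{0}-1]})^{-1}\|>C^{n}$, is converted into ``$E$ is exponentially close to an eigenvalue of the finite block'' through the self-adjoint identity $\dist(E,\sigma(H_{n}))=\|(H_{n}-E)^{-1}\|^{-1}$, and it is this conversion that allows one to sweep out the energy and run the semi-algebraic/frequency estimates. The finite CMV blocks $\CC_{[0,n_{0}-1]}$ are not self-adjoint, not even normal, so neither a large resolvent norm nor (in your formulation) a small determinant $\varphi^{\CC,z}_{[0,n_{0}-1]}$ by itself forces $z$ to be close to $\sigma(\CC_{[0,n_{0}-1]})$. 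The paper supplies the substitute: the Davies--Simon estimate from \cite{DS06}, $\sup\dist(z,\sigma(A))\,\|(A-z)^{-1}\|=\cot(\pi/4n)$ over the admissible pairs $(A,z)$, which shows that $\|(z-\CC_{[0,n_{0}-1]})^{-1}\|>C^{n}$ still pins $z$ within roughly $n_{0}C^{-n}$ of an eigenvalue of the block; only then do the analog of \cite[Lemma 4.1]{BG00} and Lemma \ref{removedoubleresonances} follow. Without this (or some equivalent device) the ``verbatim'' transfer of the elimination step fails.

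A smaller imprecision: from Theorem \ref{halflinecase} and the LDT you can only conclude a lower bound on $\max\{|\varphi^{\CC,z}_{[0,n-1]}(x)|,\,|\varphi^{\CC,z}_{[1,n-1]}(x)|\}$, as in (\ref{inequality4}), not the matching lower bound on $|\varphi^{\CC,z}_{[1,n-1]}(x)|$ alone that you assert. Consequently the Green's function decay is obtained for one of the two windows $[\ell,\ell+n-1]$, $[\ell+1,\ell+n-1]$ (equivalently one of $\CC_{[0,n-1]}(\ell\omega)$, $\CC_{[1,n-1]}(\ell\omega)$, using $\CC_{[a,b]}(x+\omega)=\CC^{T}_{[a+1,b+1]}(x)$), and the paving and restriction arguments must be run with that dichotomy, as the paper does; this is a fixable bookkeeping point, but as stated your intermediate claim is not what the formula gives.
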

\begin{remark}
In the proof this theorem, we use the method developed in \cite{BG00} and replace the equation (\ref{sformula}) there by our formula.

To make our theorem meaningful, we should mention that the assumption in this theorem is guaranteed by \cite{zhangpositive} where Zhenghe Zhang provided an example whose Lyapunov exponent is uniformly positive. More specifically, the analytic function is $\alpha(x)=\lambda e^{2\pi i h(x)}$ where $\lambda \in (0,1)$ and $h(x) \in C^{\omega}(\mathbb{T},\mathbb{T})$.
\end{remark}

\subsection{A large deviation estimate}
For analytic Schr\"odinger cocycle with Diophantine frequencies, the first large deviation estimate (LDT) is established by J. Bourgain and M. Goldstein \cite{BG00}. From the proof we can see that this LDT holds true for all analytic ${\rm SL}(2, \mathbb{R})$ matrices whose n-step transfer matrices satisfying
\begin{equation}{\label{bounded}}
\|M_{n}(x)\| + \|M_{n}^{-1}(x)\| \leq C^{n}.
\end{equation}
It is well known that $M^{z}_{\alpha_{n}}(x)$ is unitary equivalent to a ${\rm SL} (2,\mathbb{R})$ matrix via
\begin{equation*}
Q = \frac{-1}{1+i} \begin{pmatrix} 1 & -i \\ 1 & i \end{pmatrix}
\end{equation*}
that is, $Q^{*}M^{z}_{\alpha_{n}}(x)Q \in {\rm SL} (2,\mathbb{R})$.
Since our Verblunsky coefficients $\alpha_{n}(x) \in \mathbb{D}$ are analytic and $\rho_{n}(x) = \sqrt{1-|\alpha_{n}(x)|^{2}}$, so $\|M_{n}(x)\| + \|M_{n}^{-1}(x)\| \leq C$ and our n-step transfer matrices satisfy condition (\ref{bounded}). 

Therefore we have LDT for Szeg\H o cocycle maps which is as follows,
\begin{lemma}
Assume $\omega$ satisfies a Diophantine condition (DC$_{A,c}$)
    \begin{equation*}
        \text{$\|k\omega\| > c |k|^{-A} $ for $k \in \mathbb{Z} \setminus \{ 0\}$}.
    \end{equation*}
Then there is $\sigma > 0$ such that
    \begin{equation*}
        \text{{\rm mes} $\{ x \in \mathbb{T} \vert \Big | \frac{1}{n} \log \| M^{z}_{n}(x) \| - L_{n}(z) \Big |  > n^{-\sigma} \} < e^{-n^{\sigma}}$}
    \end{equation*}
for all $n \in \mathbb{Z}_{+}$ and all $z \in \CZ$.
\end{lemma}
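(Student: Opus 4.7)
The plan is to reduce the statement directly to the Bourgain--Goldstein large deviation theorem of \cite{BG00} after verifying its hypotheses in the Szeg\H o setting. First I would use the fixed unitary $Q$ from the paragraph preceding the lemma to pass from $M^{z}_{n}(x)$ to $\widetilde{M}^{z}_{n}(x) := Q^{*} M^{z}_{n}(x) Q$, which is a product of $\SL(2,\R)$ matrices. Because $Q$ is unitary and operator norms are conjugation-invariant, both $\|M^{z}_{n}(x)\|$ and $L_{n}(z)$ are unchanged by passing to $\widetilde{M}^{z}_{n}(x)$, so it suffices to prove the LDT for the $\SL(2,\R)$-valued cocycle $\widetilde{M}^{z}_{n}$.

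Next I would verify the two hypotheses needed to run the BG00 proof: analyticity in $x$ and the exponential growth bound (\ref{bounded}). Analyticity is immediate from the assumption that $\alpha : \T \to \D$ is analytic; since $\alpha(\T)$ is a compact subset of the open disc $\D$, the quantity $\rho(x) = \sqrt{1-|\alpha(x)|^{2}}$ is bounded below by a positive constant on $\T$, and consequently $1/\rho(x)$ extends analytically to some strip $|\Im x| < r_{0}$ around $\T$. The per-step uniform bound $\|M^{z}_{\alpha_{n}}(x)\| + \|(M^{z}_{\alpha_{n}}(x))^{-1}\| \leq C$, noted in the excerpt, then multiplies up to $\|M^{z}_{n}(x)\| + \|M^{z}_{n}(x)^{-1}\| \leq C^{n}$, which is exactly (\ref{bounded}). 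Together with the Diophantine condition assumed on $\omega$, these are precisely the inputs to the BG00 LDT, and for each fixed $z \in \CZ$ one obtains the claimed measure estimate.

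To promote this fixed-$z$ statement to the uniform-in-$z$ statement in the lemma, I would run the standard grid and union-bound argument. The entries of $\widetilde{M}^{z}_{n}(x)$ are polynomials in $z$ of degree $O(n)$, so a deterministic bound $\|\partial_{z} \widetilde{M}^{z}_{n}\|_{\infty} \leq \widetilde{C}^{n}$ holds. Covering the compact arc $\CZ$ by an $e^{-Cn}$-net of cardinality $O(e^{Cn})$, applying the fixed-$z$ LDT at each net point while absorbing the union bound into a slightly smaller $\sigma$, and finally interpolating between net points using the derivative bound, yields the desired uniform estimate. This is the argument used essentially verbatim in \cite{BG00} and in the later one-dimensional extensions \cite{K05, K17}.

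The main obstacle I anticipate is purely a careful reading of \cite{BG00}: one has to check that the proof there uses only analyticity in $x$ and the exponential norm bound, with nothing specific to the Schr\"odinger cocycle itself. Once this is confirmed, our LDT is obtained by transcribing the BG00 argument with $\widetilde{M}^{z}_{n}$ in place of the Schr\"odinger transfer matrix, using our Theorem \ref{halflinecase} wherever the original proof invokes the Schr\"odinger formula (\ref{sformula}).
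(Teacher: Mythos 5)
Your proposal follows essentially the same route as the paper: the paper likewise proves this lemma by conjugating the Szeg\H o cocycle with $Q$ into $\SL(2,\R)$, verifying analyticity and the bound (\ref{bounded}) (using that $\rho_{n}(x)$ is bounded away from zero since $\alpha$ maps into a compact subset of $\D$), and then invoking the Bourgain--Goldstein LDT, with the uniformity in $z\in\CZ$ observed to follow from that proof -- your net-and-interpolation step just makes this last point explicit. Two harmless quibbles: the entries of $M^{z}_{n}(x)$ are Laurent polynomials in $\sqrt{z}$ rather than polynomials in $z$ (irrelevant on the compact arc $\CZ\subset\pD$), and the determinant formula of Theorem \ref{halflinecase} plays no role in the LDT itself, only in the later localization argument.
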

\begin{remark}
In \cite{BG00}, the statement of LDT is not uniform, but from the proof we can see it holds uniformly and to prove Anderson localization.
\end{remark}
In addition, it's easy to check the results in \cite[section 2 and section 3]{BG00} also hold true for $M_{n}^{z}(x)$.

\subsection{Elimination of the eigenvalue}
In \cite[section 4]{BG00}, the elimination of double resonances at a fixed point is proved based on the following fact for self-adjoint operators
\begin{equation*}
    \dist(E, \sigma(H_{n})) = \|(H_{n}-E)^{-1}\|^{-1}.
\end{equation*}
Although $\CC_{[0,n-1]}$ is not self-adjoint, even not normal, we can still get the same estimate as in \cite[Lemma 4.1]{BG00} using the following Lemma from \cite{DS06} instead of the above fact,
\begin{prop}
Let $\mathcal{M}_{n}$ be the set of pairs $(A,z)$, where $A$ is an $n \times n$ matrix, $z \in \mathbb{C}$ with
\begin{equation*}
|z| \geq \|A\|
\end{equation*}
and
\begin{equation*}
|z| \notin \sigma(A).
\end{equation*}
Then
\begin{equation*}
\sup_{\mathcal{M}_{n}} \dist(z,\sigma(A))\|(A-z)^{-1}\| = \cot(\frac{\pi}{4n}). 
\end{equation*}
\end{prop}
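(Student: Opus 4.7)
The plan is to reduce this matrix resolvent estimate to a scalar extremal problem on $\overline{\D}$ and then invoke a classical Chebyshev--Blaschke sharp constant.

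By the joint homogeneity of both sides under $(A,z) \mapsto (A/|z|,\,z/|z|)$, I normalize to $|z|=1$ and $\|A\|\le 1$. A Schur triangularization $A = UTU^{*}$ further reduces the problem to the case where $A$ is upper triangular with diagonal entries $\lambda_{1},\ldots,\lambda_{n} \in \overline{\D}$. The next step is to eliminate the matrix variable via Cayley--Hamilton. Let $p(w) = \prod_{j}(w-\lambda_{j})$ and
\[
q(w) \defeq \frac{p(w)-p(z)}{w-z},
\]
a polynomial of degree $n-1$. The identity $p(A)=0$ gives $(A-z)q(A) = -p(z)I$, hence $(A-z)^{-1} = -q(A)/p(z)$. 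Since $\|A\|\le 1$, the von Neumann inequality yields $\|q(A)\| \le \sup_{|w|\le 1}|q(w)|$, so
\[
\dist(z,\sigma(A))\,\|(A-z)^{-1}\| \;\le\; \dist(z,\sigma(A))\,\frac{\sup_{|w|\le 1}|q(w)|}{|p(z)|},
\]
a scalar quantity depending only on $\{\lambda_{j}\}\subset\overline{\D}$ and $z\in\pD$.

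This scalar quantity is a classical extremal problem: maximize, over configurations of $n$ zeros in $\overline{\D}$ of a monic polynomial, the ratio above evaluated at a boundary point. After renormalizing via the Blaschke factors $(1-\bar{\lambda}_{j}w)$ one converts it to a Schwarz--Pick-type problem whose sharp answer is the known value $\cot(\pi/(4n))$, attained (up to rotation) by a specific critical configuration of the $\lambda_{j}$ concentric with $\pD$. To match this upper bound I then produce an explicit extremizer: a companion-type or CMV-like matrix $A_{*}$ realizing the optimal eigenvalue configuration, together with a boundary point $z_{*}$ and unit vector at which $\dist(z_{*},\sigma(A_{*}))\|(A_{*}-z_{*})^{-1}\|$ reaches $\cot(\pi/(4n))$ by direct computation in the canonical basis.

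The main obstacle is the \emph{sharp} constant: the Cayley--Hamilton plus von Neumann reduction is routine and immediately gives a resolvent bound of the correct qualitative form, but extracting the exact value $\cot(\pi/(4n))$ rather than an asymptotic $O(n)$ bound requires the full Chebyshev--Blaschke extremal theory, and matching it with an explicit extremizer requires a careful ansatz for $A_{*}$ and $z_{*}$. For the purposes of this paper the cleanest route is to invoke the statement as proved in \cite{DS06} and use it as a black box in the Application section.
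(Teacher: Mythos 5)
Two things. First, on the bottom line you reach: the paper does not prove this proposition at all --- it is quoted from Davies--Simon \cite{DS06} and used as a black box in the localization argument, so your closing recommendation (cite \cite{DS06}) coincides exactly with what the paper does, and that is the right call here.

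Second, the route you sketch before that fallback would not work, so it should not be presented even as an outline. The Cayley--Hamilton/von Neumann step is a valid inequality, but the scalar extremal problem it produces is not bounded by $\cot(\pi/(4n))$ --- it is not bounded at all. Take $z=1$ and all eigenvalues equal to $s\in(0,1)$, so $p(w)=(w-s)^{n}$ and $q(w)=\bigl((w-s)^{n}-(1-s)^{n}\bigr)/(w-1)$. Then $\dist(z,\sigma(A))=1-s$ and $|p(z)|=(1-s)^{n}$, while
\begin{equation*}
\sup_{|w|\le 1}|q(w)|\;\ge\;|q(-1)|\;=\;\tfrac{1}{2}\bigl((1+s)^{n}-(1-s)^{n}\bigr),
\end{equation*}
so your scalar bound is at least of order $(1-s)^{-(n-1)}\to\infty$ as $s\to 1^{-}$ for every $n\ge 2$. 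The reason is that von Neumann's inequality sees only the spectrum, whereas the constraint $\|A\|\le |z|$ drastically limits the Jordan structure precisely when eigenvalues approach the circle; that interplay is the entire content of the theorem, and it is destroyed by passing to the scalar problem. So no finite constant, let alone the sharp one, can be extracted this way, and the appeal to ``Chebyshev--Blaschke extremal theory'' cannot repair it. Relatedly, your plan to exhibit an exact extremizer $(A_{*},z_{*})$ is suspect: already for $n=2$, the family $A=\begin{pmatrix} s & 1-s^{2}\\ 0 & s\end{pmatrix}$ (which has $\|A\|=1$) with $z=1$ gives
\begin{equation*}
\dist(z,\sigma(A))\,\|(A-z)^{-1}\|=\frac{(1+s)+\sqrt{(1+s)^{2}+4}}{2},
\end{equation*}
which increases to $1+\sqrt{2}=\cot(\pi/8)$ only as $s\to1^{-}$, i.e.\ as the spectrum degenerates onto $z$; this strongly suggests the supremum is not attained, so at best one can exhibit a near-extremal sequence. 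The actual proof in \cite{DS06} is genuinely different from your reduction, and for the purposes of this paper the statement should simply be cited, as the text does.
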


Then we have the CMV analog of \cite[Lemma 4.1]{BG00},
\begin{lemma}
Let $\log \log \bar{n}  \ll \log n$. Denote $S \subset \mathbb{T} \times \mathbb{T}$ the set of $(\omega, x)$ such that
    \begin{equation*}
        \text{$\|k\omega\| > c |k|^{-A} $ for $k \in \mathbb{Z} \setminus \{ 0\}$}.
    \end{equation*}

There is $n_{0} < \bar{n}$ and $z$ such that
    \begin{equation}{\label{resonance1}}
        \| (z-\CC_{[0,n_{0}-1]}(0))^{-1} \| > C^{n}
    \end{equation}
and
    \begin{equation}{\label{resonance2}}
        \frac{1}{n} \log \| M^{z}_{n}(x) \| < L_{n}(z)  -n^{\sigma}
    \end{equation}
Then
\begin{equation*}
    \text{{\rm mes} $S < e^{-\frac{1}{2}n^{\sigma}}$.}
\end{equation*}

\end{lemma}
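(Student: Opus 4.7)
The plan is to mimic \cite[Lemma~4.1]{BG00}, replacing the self-adjoint identity $\dist(E,\sigma(H))=\|(H-E)^{-1}\|^{-1}$ by the one-sided bound from the proposition above, and then running the standard net-plus-Fubini argument powered by the preceding LDT lemma.

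First I convert the resonance condition \eqref{resonance1} into spectral confinement of $z$. Since $\mathcal{C}_{[0,n_0-1]}(0)$ is a compression of the unitary half-line CMV matrix $\mathcal{C}(0)$, its norm is at most $1=|z|$, so the hypothesis of the proposition is met. It yields
\[
\dist\bigl(z,\sigma(\mathcal{C}_{[0,n_0-1]}(0))\bigr)
\;\le\; \cot\!\bigl(\tfrac{\pi}{4n_0}\bigr)\,C^{-n}
\;\lesssim\; n_0\,C^{-n}.
\]
Hence, for each Diophantine $\omega$, the admissible $z$ lies in one of at most $n_0$ disks of radius $O(n_0 C^{-n})$ around the eigenvalues $\{z_j(\omega)\}_{j=1}^{n_0}$ of $\mathcal{C}_{[0,n_0-1]}(0)$; these eigenvalues depend on $\omega$ only since $x=0$ is fixed in \eqref{resonance1}.

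Second, by analyticity of $\alpha$, the maps $z\mapsto\tfrac{1}{n}\log\|M^z_n(x)\|$ and $z\mapsto L_n(z)$ are $\mathrm{poly}(n)$-Lipschitz on $\mathcal{Z}$ uniformly in $x$, so a perturbation of $z$ of size $O(n_0 C^{-n})$ is far too small to affect the LDT failure \eqref{resonance2} at scale $n^{-\sigma}$. Consequently, for every Diophantine $\omega$ the slice $\{x\in\mathbb{T}:(\omega,x)\in S\}$ is contained in
\[
\bigcup_{n_0<\bar n}\bigcup_{j=1}^{n_0}\Bigl\{x\in\mathbb{T} \;:\; \tfrac{1}{n}\log\|M^{z_j(\omega)}_n(x)\|<L_n(z_j(\omega))-\tfrac12 n^{-\sigma}\Bigr\},
\]
a union of at most $\bar n^2$ sets, each of $x$-measure $\le e^{-n^\sigma}$ by the preceding LDT lemma applied at the $\omega$-dependent point $z_j(\omega)\in\mathcal{Z}$. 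Fubini together with the hypothesis $\log\log\bar n\ll\log n$, which forces $\bar n^2\ll e^{n^\sigma/2}$, then closes the bound $\mathrm{mes}(S)<e^{-n^\sigma/2}$.

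The main obstacle is the first step: in the self-adjoint setting of \cite{BG00} the identity $\dist(E,\sigma(H))=\|(H-E)^{-1}\|^{-1}$ is free, but truncated CMV matrices are not even normal and one only has the one-sided inequality with a $\cot(\pi/(4n_0))\sim n_0$ loss, supplied by the cited proposition from \cite{DS06}. Once this replacement is in place the rest is standard Bourgain–Goldstein machinery, and the assumption $\log\log\bar n\ll\log n$ appears precisely to absorb the $\bar n^2$ union bound coming from summing over $n_0<\bar n$ and the $n_0$ possible eigenvalues at each scale.
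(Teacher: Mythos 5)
Your proposal follows essentially the route the paper intends (and only sketches): use the Davies--Simon bound $\dist(z,\sigma(A))\,\|(A-z)^{-1}\|\le\cot(\pi/4n)$ in place of the self-adjoint distance identity to confine $z$ to exponentially small neighborhoods of the $\omega$-dependent eigenvalues of $\CC_{[0,n_0-1]}(0)$, then transfer the LDT failure to those eigenvalues, apply the large deviation estimate slice-by-slice, and finish with the union bound over $n_0<\bar n$ and the $n_0$ eigenvalues plus Fubini, the hypothesis $\log\log\bar n\ll\log n$ absorbing the $\bar n^2$ factor. Two small inaccuracies, both harmless because the displacement of $z$ is $O(C^{-n})$ with $C$ large: the Lipschitz constant of $z\mapsto\frac1n\log\|M^z_n(x)\|$ is of size $C_0^{\,n}$ rather than $\mathrm{poly}(n)$, and the eigenvalues $z_j(\omega)$ lie strictly inside $\mathbb{D}$, not in $\CZ$, so the LDT should be invoked at their radial projections onto the circle (or on a slightly enlarged arc).
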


\subsection{Semi-algebraic sets and frequency estimates}
According to the method in \cite{BG00}, in order to get Anderson localization, we need remove double resonances along the orbits $\{ x+n\omega \}$ where $n$ should be large enough. For i.i.d. distributed Verblunsky coefficients, this result is not so hard to obtain \cite{7authors17}, but for analytic Verblunsky coefficients, it becomes much more complicated. We need to rewrite the resonance conditions (\ref{resonance1}) and (\ref{resonance2}) as polynomials, then use the tool developed in \cite{M64} to estimate the complexity of $S$.

More specifically, we need to know the upper bound of interval numbers in the set $S_{x}=\{\omega \in \mathbb{T} | (\omega, x) \in S\}$ and we want to show it's at most polynomially large. By unitary conjugacy, all statements in \cite[section 5 and section 6]{BG00} hold true for the CMV case, which means we have the following estimate.

\begin{lemma}{\label{removedoubleresonances}}
Choose $\delta>0$ and $n$ a sufficiently large integer. Denote $\Omega_{n,\delta} \subset \mathbb{T}$ the set of frequencies $\omega \in DC_{10,c}$ such that

There is $n_{0}<n^{C}$, $2^{(\log n)^{2}} \leq \ell \leq 2^{(\log n)^{3}}$, and $z$ such that
\begin{equation*}
    \|(z-\CC_{[0,n_{0}-1]}(0))^{-1}\| > C^{n},
\end{equation*}
\begin{equation*}
    \frac{1}{n} \log \| M^{z}_{n}(\ell \omega) \| < L_{n}(z)  -n^{\sigma}
\end{equation*}
Then
\begin{equation*}
    \text{{\rm mes} $\Omega_{n,\delta}< 2^{-\frac{1}{4}(\log n)^{2}}$}.
\end{equation*}
\end{lemma}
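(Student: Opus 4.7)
The plan is to follow the semi-algebraic framework of \cite[Sections 5--6]{BG00} verbatim, now using our Theorem \ref{halflinecase} in place of the Schr\"odinger formula (\ref{sformula}). First I would encode the conjunction of (\ref{resonance1}) and (\ref{resonance2}) as a semi-algebraic condition in the real parameters $(\omega, x, \Re z, \Im z)$. The resolvent condition (\ref{resonance1}) is implied by $\bigl|\det(z - \CC_{[0,n_0-1]}(0))\bigr| < (C')^{-n}$, which is manifestly a polynomial inequality once the analytic $\alpha$ is replaced by a Fourier truncation of degree $N\sim (\log n)^{A}$; the truncation error is exponentially small in $N$ and is absorbed into constants. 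The norm condition (\ref{resonance2}), after the unitary conjugation by $Q$ that makes the cocycle $\mathrm{SL}(2,\R)$-valued, likewise becomes a polynomial inequality because Theorem \ref{halflinecase} exhibits every entry of $S_{n}^{z}(\ell\omega)$ as a quotient of a determinant $\varphi^{\CC,z}_{[\cdot,\cdot]}(\ell\omega)$ by $\prod_{j}\rho_{j}(\ell\omega)$, and both numerator and denominator are polynomial in the truncated Fourier data and in $z$.

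Having realized the bad set as a semi-algebraic $\mathcal{S}\subset\T\times\T$ of degree at most $n^{C}$, I would apply the previous Lemma to obtain $\mathrm{mes}(\mathcal{S}) < e^{-\frac{1}{2}n^{\sigma}}$ after restricting the $\omega$-variable to $DC_{10,c}$. Then I would invoke the semi-algebraic slicing estimate of \cite{M64} in the form used in \cite[Section 6]{BG00}: for a semi-algebraic set in $\T^{2}$ of complexity $B$ and measure $\leq \eta$, and for any integer $\ell$ with $2^{(\log n)^{2}} \leq \ell \leq 2^{(\log n)^{3}}$, the section $\{\omega:(\omega,\ell\omega)\in\mathcal{S}\}$ is a union of at most $B^{O(1)}$ intervals whose total measure is bounded by a fixed power of $\eta$ plus a term decaying polynomially in $\ell$. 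Summing over $\ell$ in the prescribed range, and using the Diophantine restriction to exclude the small-divisor configurations, the loss from the $\ell$-sum is at most $2^{(\log n)^{3}}$, which is defeated by $\eta^{1/2} \sim e^{-n^{\sigma}/4}$ with enormous room; what remains is the irreducible contribution from the semi-algebraic complexity, yielding the stated bound $2^{-\frac{1}{4}(\log n)^{2}}$.

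The principal technical obstacle is the complexity bookkeeping through the Fourier cutoff: the analytic $\alpha$ must be approximated by a trigonometric polynomial of degree $N$ so that the polynomial degree of $\mathcal{S}$ remains $(nN)^{O(1)}$ while the approximation error $\sim e^{-cN}$ stays negligible compared to the exponential smallness demanded by (\ref{resonance1}). The choice $N\sim (\log n)^{A}$ balances the two requirements, but one must verify that the $\rho_{j}^{-1}$ factors (which are algebraic in $\alpha$, not polynomial) can be cleared by multiplying through by $\prod_{j}\rho_{j}$, a quantity bounded below uniformly on $\mathcal{Z}\times\mathcal{I}$ because $|\alpha|<1$ is an analytic condition. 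Once this is arranged, every remaining step of \cite[Sections 5--6]{BG00} transfers to the CMV setting without structural change, because the determinantal form provided by Theorem \ref{halflinecase} plays exactly the same role for $S^{z}_{n}$ as (\ref{sformula}) plays for the Schr\"odinger transfer matrix.
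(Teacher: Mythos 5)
Your proposal follows the same route as the paper, which (without giving details) simply asserts that the arguments of \cite[Sections 5--6]{BG00} carry over by unitary conjugacy once the resonance conditions are rewritten as polynomial conditions via Theorem~\ref{halflinecase}; your fleshing out of the semi-algebraic encoding, Milnor slicing, and $\ell$-sum bookkeeping matches that plan. The one step that would actually fail as written is the Fourier truncation level. You take $N \sim (\log n)^{A}$ and assert the truncation error $\sim e^{-cN}$ is ``negligible compared to the exponential smallness demanded by (\ref{resonance1}).'' But (\ref{resonance1}) asks for $\|(z-\CC_{[0,n_0-1]}(0))^{-1}\| > C^{n}$, i.e.\ the determinant $\det(z-\CC_{[0,n_0-1]})$ must be resolved at scale $C^{-n}$, whereas $e^{-c(\log n)^{A}}$ is vastly larger than $C^{-n}$ for large $n$. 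A trigonometric polynomial of that degree cannot distinguish a resonant $z$ from a non-resonant one, so the semi-algebraic set you build would bear no quantitative relation to the true bad set.

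The fix is the one \cite{BG00} actually uses: truncate to degree $N \sim n^{C}$, so the approximation error $e^{-cn^{C}}$ is below the scale $C^{-n}$ of the resolvent condition, while the semi-algebraic degree $B = (nN)^{O(1)}$ remains polynomial in $n$ --- which is all the Milnor/Thom component bound requires. With that correction the $\ell$-bookkeeping you sketch does close: the complexity term contributes $B^{O(1)}\cdot 2^{-(\log n)^{2}} = 2^{-(\log n)^{2}+O(\log n)} < 2^{-\frac{1}{4}(\log n)^{2}}$ for $n$ large (using $\ell \geq 2^{(\log n)^{2}}$), and the measure term $2^{(\log n)^{3}}\cdot e^{-\frac{1}{2}n^{\sigma}}$ is negligible. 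Your point about clearing the $\rho_{j}^{-1}$ factors by multiplying through (legitimate since $\rho_j$ is bounded below on the relevant compact sets) is correct and is the place where Theorem~\ref{halflinecase} enters in an essential way.
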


\subsection{Proof of Theorem \ref{cmvandersonlocalization}} Now, we are ready to prove Anderson localization of CMV matrices with analytic Verblunsky coefficients using our formula (\ref{formulahalf}).

     Denote by $\Omega_{n,\delta}$ the frequency set obtained in Lemma 
\ref{removedoubleresonances} and define
    \begin{equation*}
        \Omega_{\delta} = \bigcap\limits_{n'} \bigcup\limits_{n>n'} \Omega_{n,\delta}, \quad \Omega = \bigcup\limits_{\delta} \Omega_{\delta},
    \end{equation*}
then  mes $\Omega = 0$.

Take $\omega \in \mathcal{I} \cap (DC_{10,c} \setminus \Omega)$ and let $z \in \CZ$, $\xi = (\xi_{n})_{n \in \mathbb{N}}$ satisfy the equation
\begin{equation*}
\CC(0) \xi = z \xi
\end{equation*}
where $\xi_{0}=1$ and $|\xi_{n}| \leq n^{C}$.

Let $\delta = \frac{\delta_{0}}{1000}$ and assume there is $n_{0}<n^{C}$ satisfies,
\begin{equation*}
    \|(z-\CC_{[0,n_{0}-1]}(0))^{-1}\| > C^{n},
\end{equation*}
then Lemma \ref{removedoubleresonances} tells us for all $2^{(\log n)^{2}} \leq \ell \leq 2^{(\log n)^{3}}$,  
\begin{equation}{\label{inequality1}}
    \frac{1}{n} \log \| M^{z}_{n}(\ell \omega) \| > L_{n}(z) - \delta.
\end{equation}
Similar to \cite[Lemma 2.1]{BG00}, we have
\begin{equation}{\label{inequality2}}
\frac{1}{n} \log \| M_{n}^{z}(x)\| < L_{n}(z) + \delta
\end{equation}
for all $z \in \CZ$.

According to Cramer's rule and direct calculations, we have
\begin{equation}{\label{cramerrule}}
|(z-\CC_{[0,n_{0}-1]})^{-1}(n_{1},n_{2})| \leq C \Big | \frac{\det(z-\tilde{\CC}_{[0,n_{1}-1]}) \det(z-\tilde{\CC}_{[n_{2}+1,n_{0}]})}{\det (z-\CC_{[0,n_{0}-1]})} \Big |
\end{equation}
where $|\det(z-\tilde{\CC}_{[a,b]})| \asymp |\det(z-\CC_{[a,b]})|$.

Next, Theorem \ref{halflinecase} allows us to estimate (\ref{cramerrule}).

\indent Recall that $\|M_{n}^{z}(x)\| = \|S_{n}^{z}(x)\|$, then Theorem \ref{halflinecase} implies
\begin{equation}{\label{inequality3}}
\begin{split}
 \|M_{n}^{z}(x)\| &\geq \prod\limits_{j=0}^{n-1}\frac{1}{\rho_{j}} (|\det(z-\CC_{[1,n-1]})| + |\det(z-\CC_{[0,n-1]})-z\det(z-\CC_{[1,n-1]})|)\\
                  &\geq \prod\limits_{j=0}^{n-1}\frac{1}{\rho_{j}} (|\det(z-\CC_{[1,n-1]})| + |\det(z-\CC_{[0,n-1]})|-|\det(z-\CC_{[1,n-1]})|)\\
                  &\geq \prod\limits_{j=0}^{n-1}\frac{1}{\rho_{j}} |\det(z-\CC_{[0,n-1]})|
\end{split}
\end{equation}
and 
\begin{equation*}
\text{$\|M_{n}^{z}(x)\| \leq 4 |\det(z-\CC_{[0,n-1]})|$ or $\|M_{n}^{z}(x)\| \leq 4 |\det(z-\CC_{[0,n-1]})-z\det(z-\CC_{[1,n-1]}))|$ }
\end{equation*}
which is equivalent to 
\begin{equation}{\label{inequality4}}
\|M_{n}^{z}(x)\| \leq 8 \max \{|\det(z-\CC_{[0,n-1]})|, |\det(z-\CC_{[1,n-1]})|\}.
\end{equation}

Apply inequalities (\ref{inequality1}), (\ref{inequality2}), (\ref{inequality3}) and (\ref{inequality4}) to (\ref{cramerrule}), then for each $2^{(\log n)^{2}} \leq \ell \leq 2^{(\log n)^{3}}$, one of the matrices
\begin{equation*}
    \CC_{[0,n-1]}(\ell \omega), \CC_{[1,n-1]}(\ell \omega)
\end{equation*}
thus $\CC_{\Lambda}(0)$ for some $\Lambda \in \{ [\ell, \ell+n-1], [\ell+1,\ell+n-1 ] \}$ will satisfy

\begin{equation*}
|G_{\Lambda}(n_{1},n_{2})|=|(z-\CC_{\Lambda})^{-1}(n_{1},n_{2})| \leq e^{-L_{n}(E)|n_{1}-n_{2}|+o(n)} \leq e^{-\delta_{0}L(E)+o(n)},
\end{equation*}
where we use the fact $\mathcal{C}_{[a,b]}(x+\omega)=\mathcal{C}^{T}_{[a+1,b+1]}(x)$ which means they have the same absolute value and the second inequality is because $L_{n}(z) < L(z) + \delta$ for n large.

Next, according to paving property in \cite{BG00}, for $2^{(\log n)^{2}+1}<N<2^{(\log n)^{3}-1}$, the Green's function $G_{[\frac{N}{2},N]}$ satisfies
\begin{equation*}
|G_{[\frac{N}{2},N]}(n_{1},n_{2})|<e^{-\delta |n_{1}-n_{2}|+o(N)}
\end{equation*}
Restricting the eigenequation $\CC(0)\xi=z\xi$ to the interval $[\frac{N}{2},N]$ and direct calculations imply
\begin{equation*}
|\xi_{N}| \leq e^{-\frac{\delta}{3}N}
\end{equation*}

It remains to show there is $n_{0}<n^{C}$ satisfies,
\begin{equation*}
    \|(z-\CC_{[0,n_{0}-1]}(0))^{-1}\| > C^{n},
\end{equation*}
which is almost the same with that in \cite{BG00}.

\section{Acknowledgements}
I would like to thank Zhenghe Zhang for suggesting me to figure out this formula and many useful discussions. I am grateful to my advisors David Damanik and Daxiong Piao for their partly supports.

\end{document}